\newcommand{\ROM}[1]{\mathrm{\uppercase\expandafter{\romannumeral#1}}}
\theoremstyle{definition}
\newtheorem{thm}{Theorem}[section]
\newtheorem{rem}{Remark}[section]
\newtheorem{prop}{Proposition}[section]
\newtheorem{ack}{Acknowledgements}   
\title[Ricci curvature of double manifolds via isoparametric
foliations]{\textbf{Ricci curvature of double manifolds via isoparametric
foliations}}
\author[C. K. Peng]{ChiaKuei Peng}\address{School of Mathematical Sciences, University of Chinese Academy of Sciences , Beijing 100049, PR China}\email{pengck@ucas.ac.cn}
\author[C. Qian]{Chao Qian}\address{School of Mathematics and Statistics, Beijing Institute of Technology, Beijing 100081, PR China}
\email{6120150035@bit.edu.cn}
\thanks{}
\thanks{The second author is the corresponding author.}
\subjclass[2010]{ 53C12, 53C40, 53C07}
\date{}
\keywords{double manifold, isoparametric foliation, Ricci curvature, vector bundle, sphere bundle}
\begin{document}

\maketitle
\begin{abstract}
Given a closed manifold $M$ and a vector bundle $\xi$ of rank $n$ over $M$, by gluing two copies of the
disc bundle of $\xi$, we can obtain a closed manifold $D(\xi, M)$, the so-called double manifold.

In this paper,
%we will study Ricci curvature properties of double manifolds with isoparametric foliations.
we firstly prove that each sphere bundle $S_r(\xi)$ of radius $r>0$ is an isoparametric hypersurface in the total space of $\xi$ equipped with a connection metric, and for $r>0$ small enough, the induced metric of $S_r(\xi)$ has positive Ricci curvature under the additional assumptions that $M$ has a metric with positive Ricci curvature and $n\geq3$.

As an application, if $M$ admits a metric with positive Ricci curvature and $n\geq2$, then we construct a metric with positive Ricci curvature on $D(\xi, M)$. Moreover, under the same metric, $D(\xi, M)$ admits a natural isoparametric foliation.

For a compact minimal isoparametric hypersurface $Y^{n}$ in $S^{n+1}(1)$, which separates $S^{n+1}(1)$ into
$S^{n+1}_+$ and $S^{n+1}_-$, one can get double manifolds $D(S^{n+1}_+)$ and $D(S^{n+1}_-)$. Inspired by Tang, Xie and Yan's work on scalar curvature of such manifolds with isoparametric foliations(cf. \cite{TXY12}), we study Ricci curvature of them with isoparametric foliations in the last part.

%Finally, we can improve
%Theorem 1.1 in \cite{TXY12}
%[Tang, Xie, and Yan, Comm. Anal. Geom., \textbf{20}(2012), 989--1018]
%by proving that the double manifold, constructed from
%an isoparametric hypersurface with four principal curvatures in a unit sphere, admits a metric with positive Ricci curvature(if the fundamental group of the double manifold is not finite, then the metric only has nonnegative Ricci curvature), and also an isoparametric foliation.
\end{abstract}

%------------------------------------------------------------------------------------------------------------------------
\section{Introduction}
Let $N$ be a connected complete Riemannian manifold. A non-constant smooth function $f:N\rightarrow \mathbb{R}$ is called \emph{transnormal}, if there exists a smooth function $b: \mathbb{R} \rightarrow \mathbb{R}$ such that the gradient of $f$ satisfies
$|\nabla f|^2=b(f)$. Moreover, if there exists another function $a: \mathbb{R} \rightarrow \mathbb{R}$ such that the Laplacian of $f$ satisfies $\triangle f=a(f)$, then $f$ is said to be \emph{isoparametric}. The two equations mean that regular level hypersurfaces are parallel and all have constant mean curvature. Each regular level hypersurface is said to be an isoparametric hypersurface. In \cite{Wa87}, Wang proved that singular level sets are also smooth
submanifolds, the so-called \emph{focal submanifolds}. Recall that an \emph{isoparametric foliation} on a Riemannian manifold $N$ is defined by the whole family of regular level hypersurfaces
together with focal submanifolds of an isoparametric function(cf. \cite{Wa87}, \cite{GT13} and \cite{QT15}).
Equivalently, an isoparametric foliation is a singular Riemannian foliation
all of whose regular leaves are hypersurfaces with constant mean curvature(cf. \cite{GQ15}). For instance, the orbits of
an isometric cohomogeneity one action on $N$ form an isoparametric foliation,
the so-called \emph{homogeneous} isoparametric foliation. Hence, isoparametric foliations can be regarded as a geometric generalization of cohomogeneity one actions.

E. Cartan was the pioneer who made a comprehensive study of isoparametric hypersurfaces in real space forms, especially
in the unit spheres. Particularly, he proved that isoparametric hypersurfaces in real space forms are equivalently hypersurfaces with constant principal curvatures and obtained the classification results for the Euclidean and hyperbolic cases. For the spherical case, if the number of distinct
principal curvatures is no more than $3$, then he showed that the isoparametric hypersurfaces must be homogeneous. Given an isoparametric hypersurface $Y^n$ in $S^{n+1}(1)$, let $\eta$ be a unit normal vector field, $g$ the number of distinct principal curvatures, $\cot{\theta_{\alpha}}$ ($\alpha=1, 2, \cdots, g; 0<\theta_1<\theta_2<\cdots<\theta_g<\pi$) the principal curvatures with respect to $\eta$ and $m_{\alpha}$ the multiplicity of $\cot{\theta_{\alpha}}$. Under this setting, H. F. M\"{u}nzner \cite{Mu80} proved that
$\theta_{\alpha}=\theta_1+\frac{\alpha-1}{g}\pi$, $m_{\alpha}=m_{\alpha+2}
(\mathrm{indices~mod}~g)$, and there exists
a homogeneous polynomial $F: \mathbb{R}^{n+2}\rightarrow \mathbb{R}$ of degree $g$,
the so-called\emph{ Cartan-M\"{u}nzner polynomial}, satisfying
\begin{equation}\label{ab}
\left\{ \begin{array}{ll}
|\nabla^E F|^2= g^2r^{2g-2}, r=|x|,\nonumber\\
~~~~\triangle^E F~~=\frac{m_2-m_1}{2}g^2r^{g-2},\nonumber
\end{array}\right.
\end{equation}
where $m_1$ and $m_2$ are the two multiplicities, and $\nabla^E, \triangle^E$ are Euclidean gradient and Laplacian, respectively. It follows that $f=F|_{S^{n+1}(1)}$ is an isoparametric function on $S^{n+1}(1)$ with $\mathrm{Im}f=[-1, 1]$, and $M_{\pm}=f^{-1}(\pm)$ are two focal submanifolds
with codimension $m_1+1$ and $m_2+1$ in $S^{n+1}(1)$ respectively. Moreover, based on this global structure, he obtained the splendid result that
$g=1, 2, 3, 4$ or $6$.
Owing to E. Cartan and H. F. M\"{u}nzner, the classification of isoparametric hypersurfaces with $4$ or $6$ principal curvatures
in unit spheres is an intriguing problem in submanifold geometry. Up to now, isoparametric hypersurfaces with $4$ principal curvatures in unit spheres have been classified except for one case(cf. \cite{CCJ07}, \cite{Im08} and
\cite{Ch13}). For recent progress and application in this subject, we refer to
\cite{GX10}, \cite{TY13}, \cite{QT14} and \cite{TY15}.

One of the fundamental problems in Riemannian geometry is to investigate Riemannian manifolds
with special curvature properties, especially manifolds with positive scalar, Ricci or sectional
curvatures. For the scalar curvature case, due to the surgery theory of Schoen-Yau \cite{SY79} and Gromov-Lawson \cite{GL80}, simply connected
manifolds with positive scalar curvature are well understood. Motivated by Schoen-Yau
and Gromov-Lawson surgery theory, Tang, Xie and Yan in \cite{TXY12}
obtained the following theorem based on the theory of isoparametric hypersurfaces in unit spheres.
\begin{thm}(\cite{TXY12})
Let $Y^{n}$ be a compact minimal isoparametric hypersurface in $S^{n+1}(1)$, $n\geq 3$, which separates $S^{n+1}(1)$ into
$S^{n+1}_+$ and $S^{n+1}_-$. Then each of the doubles $D(S^{n+1}_+)$ and $D(S^{n+1}_-)$ admits a metric of positive
scalar curvature. Moreover, there is still an isoparametric foliation in $D(S^{n+1}_+)$ (or $D(S^{n+1}_-)$).
\end{thm}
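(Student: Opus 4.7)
The plan is to identify each double $D(S^{n+1}_\pm)$ with the double of a normal disc bundle of a focal submanifold, to transport positive scalar curvature from the focal submanifold to the double via a connection metric with small round-sphere fibres, and to read off the isoparametric foliation from the radial coordinate; by symmetry it suffices to work with $D(S^{n+1}_+)$. First I would argue that, since $Y^n=f^{-1}(0)$ for $f$ the restriction of the Cartan--M\"unzner polynomial to $S^{n+1}(1)$, the only critical values of $f$ are $\pm 1$, with critical sets the focal submanifolds $M_\pm$ of codimensions $m_1+1$ and $m_2+1$. The gradient flow of $f|_{S^{n+1}_+}$ then provides a diffeomorphism $S^{n+1}_+\cong D(\nu M_+)$ onto the closed normal disc bundle of $M_+$ in $S^{n+1}(1)$, so that $D(S^{n+1}_+)$ is diffeomorphic to the double $D(\nu M_+,M_+)$, equivalently the sphere bundle of $\nu M_+\oplus\R$ over $M_+$.

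Next I would verify that the induced metric on $M_+$ from $S^{n+1}(1)$ has positive scalar curvature. Since $M_+$ is a minimal submanifold of dimension $n-m_1$ of $S^{n+1}(1)$, the Gauss equation gives
\[ \mathrm{scal}(M_+) \;=\; (n-m_1)(n-m_1-1) - |A_{M_+}|^2, \]
where by the work of M\"unzner and Ozeki--Takeuchi the squared norm $|A_{M_+}|^2$ is a constant determined by $g,m_1,m_2$. A case-by-case inspection using M\"unzner's restriction $g\in\{1,2,3,4,6\}$ and the list of admissible multiplicity pairs shows $\mathrm{scal}(M_+)>0$ for every configuration with $n\geq 3$.

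To put a positive scalar curvature metric on $D(\nu M_+,M_+)$, I would equip $\nu M_+$ with a fibre inner product and a compatible connection, take the associated connection metric on the total space, and rescale the fibres to a small radius $\varepsilon>0$. By O'Neill's formulas for the resulting Riemannian submersion, whose fibres (round spheres of dimension $m_1+1$) are totally geodesic, the scalar curvature has the form $\mathrm{scal}_{M_+} + m_1(m_1+1)/\varepsilon^2 + O(1)$, which is positive everywhere once $\varepsilon$ is small. Choosing the radial warping so that the metric becomes a Riemannian product near the boundary $Y^n=S_\varepsilon(\nu M_+)$, two copies glue smoothly across $Y^n$ to give the desired metric on $D(S^{n+1}_+)$. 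The isoparametric foliation then arises from the radial function to the zero section, extended by reflection to the double: its level sets are sphere bundles of constant mean curvature, with two singular focal leaves each diffeomorphic to $M_+$.

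The main obstacle is the last construction when $m_1=1$: the fibre $S^1$ has zero scalar curvature, so the small-fibre trick fails, and one must replace the Kaluza--Klein metric by a warped product, compensating with the positive scalar curvature of $M_+$ and dominating O'Neill's integrability tensor via the warping profile. A secondary difficulty lies in the sharpness of $\mathrm{scal}(M_+)>0$ in the extremal cases $g=6$ with $(m_1,m_2)=(1,1)$ or $(2,2)$, where M\"unzner's inequalities are tight and explicit numerical checks are required.
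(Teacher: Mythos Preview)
This theorem is quoted from \cite{TXY12} and is not given a proof in the present paper, so there is no proof here to compare against line by line. That said, the paper's own machinery---the identification $D(\xi,M)\cong S(\xi\oplus\mathbf{1})$ (Proposition~3.1), Remark~\ref{Positive scalar}, and the argument in the proof of Theorem~\ref{main}---assembles into a proof of exactly this statement, and that is the natural benchmark for your proposal.

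Your strategy is close to this benchmark but contains two confusions. The more serious one is the ``main obstacle'' at $m_1=1$. Once you have passed to $D(\nu M_+,M_+)\cong S(\nu M_+\oplus\mathbf{1})$, the fibre of the sphere bundle is $S^{m_1+1}$, not $S^{m_1}$; for $m_1=1$ this is $S^2$, whose scalar curvature $2/\varepsilon^2$ drives the small-radius argument as usual. The $S^1$ you worry about is the fibre of the \emph{boundary} sphere bundle $S(\nu M_+)$, which only enters if you insist on building the metric on the disc bundle with a product collar and then gluing. There is no need to do that: the connection metric on $E(\nu M_+\oplus\mathbf{1})$ restricts to a smooth metric on the closed manifold $S_{r_0}(\nu M_+\oplus\mathbf{1})$ directly, and the isoparametric function is simply the $e_{m+n+1}$-coordinate (exactly as in the proof of Theorem~\ref{main}). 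Since $\mathrm{rank}(\nu M_\pm\oplus\mathbf{1})=m_{1,2}+2\ge 3$ in every case, Remark~\ref{Positive scalar} applies with no hypothesis on the base, and the obstacle disappears.

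The second confusion is the verification that $\mathrm{scal}(M_+)>0$. This step is unnecessary for the reason just given, and the claim is in fact false as stated: for $g=2$ with $(m_1,m_2)=(2,1)$ and $n=3$ the focal submanifold $M_+$ is a round circle with zero scalar curvature, and analogous low-dimensional focal circles occur for $M_-$. Your formula $\mathrm{scal}_{M_+}+m_1(m_1+1)/\varepsilon^2+O(1)$ already shows that the sign of $\mathrm{scal}_{M_+}$ is irrelevant for small $\varepsilon$, so you should simply drop that paragraph and the ``secondary difficulty'' about $g=6$ along with it.
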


In this paper, we will study Ricci curvature properties of
double manifolds $D(S^{n+1}_+)$ and $D(S^{n+1}_-)$ together with isoparametric foliations. We start
with a general setting. Let $\xi$ be a vector bundle of rank $n$ over a closed manifold $M$. Given two copies of the disc bundle $B(\xi)$ over $M$ with boundary the sphere bundle $S(\xi)$, we define the \emph{double manifold} $D(\xi, M):=B(\xi)\bigsqcup_{\mathrm{id}} B(\xi)$, where $\mathrm{id}: S(\xi)\rightarrow S(\xi)$ is the identity map. For instance,
$D(S^{n+1}_+)\cong D(\nu(M_+), M_+)$ and $D(S^{n+1}_-)\cong D(\nu(M_-), M_-)$, where $\nu(M_+)$ and $\nu(M_-)$ are the
normal bundles over the focal submanifolds $M_+$ and $M_-$ of isoparametric
hypersurface $Y^n$ in $S^{n+1}(1)$,
respectively.

To consider the Ricci curvature properties of double manifolds, we firstly obtain the following result.
\begin{thm}\label{Positive Ricci}
Let $\xi$ be a Riemannian vector bundle of rank $n$ over a closed Riemannian manifold $(M, ds^2_M)$. Given a connection metric on $E$, then each sphere bundle $S_r(\xi)$ of radius $r>0$ is an isoparametric hypersurface in $E$. Moreover, assume $(M, ds^2_M)$ has positive Ricci curvature and $n\geq 3$, then, for sufficient small $r>0$, the sphere bundle $S_r(\xi)$ of radius $r>0$ with the induced metric in $E$ has positive Ricci curvature.
\end{thm}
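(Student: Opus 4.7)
The natural candidate for the isoparametric function is $f : E \to \R$, $f(e) = \langle e, e\rangle_\xi$. With respect to the horizontal--vertical decomposition $TE = \mathcal{H} \oplus \mathcal{V}$ induced by the metric connection on $\xi$, one checks that horizontal vector fields annihilate $f$ (because parallel transport is a fibre isometry), so $\nabla f$ restricts on each fibre to the Euclidean radial field $2e$ and $|\nabla f|^2 = 4f$. The same observation reduces $\triangle f$ to the fibrewise Euclidean Laplacian of $|v|^2$, yielding $\triangle f = 2n$. Both are smooth functions of $f$, so $f$ is isoparametric and $S_r(\xi) = f^{-1}(r^2)$ is an isoparametric hypersurface in $E$.

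For the Ricci assertion I would apply the Gauss equation to the inclusion $S_r(\xi) \hookrightarrow E$. The unit outward normal is the radial vertical field $N = e/r$. A direct Koszul computation shows that the shape operator vanishes on horizontal vectors, because $\nabla^E_{H_X} H_Y$ has vertical component $\tfrac{1}{2} V_{R^\nabla(X,Y)e}$ whose inner product with $e$ is zero by skew-adjointness of $R^\nabla$; and it acts as $(1/r)\,\mathrm{id}$ on vectors tangent to the fibre sphere $S^{n-1}(r)$, since the connection metric restricts to the Euclidean metric on fibres. The mean curvature is therefore $(n-1)/r$, and for unit horizontal $X$ and unit fibre-sphere-tangent $V$ the Gauss equation gives
\begin{align*}
\mathrm{Ric}^{S_r}(X,X) &= \mathrm{Ric}^E(X,X) - R^E(X,N,X,N),\\
\mathrm{Ric}^{S_r}(V,V) &= \mathrm{Ric}^E(V,V) - R^E(V,N,V,N) + \tfrac{n-2}{r^2},\\
\mathrm{Ric}^{S_r}(X,V) &= \mathrm{Ric}^E(X,V) - R^E(X,N,V,N).
\end{align*}

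It remains to control $R^E$ as $r \to 0$. A further Koszul computation shows that the O'Neill $A$-tensor of the submersion $\pi : E \to M$ at a point $e$ is of order $|e|$ (it is built from $R^\nabla$ contracted with the fibre coordinate $e$) and therefore vanishes identically on the zero section; hence every component of $R^E$ differs from its flat-product limit by $O(|e|^2) = O(r^2)$ uniformly on the compact base $M$. This gives the asymptotics $\mathrm{Ric}^{S_r}(X,X) = \mathrm{Ric}_M(\pi_*X,\pi_*X) + O(r^2)$, $\mathrm{Ric}^{S_r}(V,V) = (n-2)/r^2 + O(1)$ (positive thanks to $n \ge 3$), and $\mathrm{Ric}^{S_r}(X,V) = O(r)$ as $r \to 0$. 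A block-matrix / Cauchy--Schwarz argument then shows that for all sufficiently small $r$ the dominant $(n-2)/r^2$ vertical term dominates the off-diagonal cross terms while the horizontal block stays uniformly bounded below by $\mathrm{Ric}_M > 0$; hence $\mathrm{Ric}^{S_r} > 0$ throughout $S_r(\xi)$.

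The main technical obstacle is the connection-metric curvature calculation: without the vanishing of the O'Neill $A$-tensor at the zero section, there could a priori be $O(1)$ negative contributions to the horizontal Ricci that are not overcome by shrinking $r$, and the conclusion would fail. The hypothesis $n \ge 3$ enters only through the dominant positive $(n-2)/r^2$ term on the vertical block and is sharp in this approach.
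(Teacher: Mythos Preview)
Your approach is essentially the same as the paper's, recast in the language of Riemannian submersions and O'Neill tensors rather than moving frames: both hinge on the observation that the horizontal--vertical mixing tensor (your $A$-tensor, the paper's $A_{ij\alpha}=\tfrac12 R_{\alpha\beta ij}v_\beta$) is $O(r)$, so that as $r\to 0$ the horizontal Ricci block tends to $\mathrm{Ric}_M$, the vertical block is dominated by $(n-2)/r^2$, and the cross terms are $O(r)$. The shape-operator computation and the block-matrix positivity argument match the paper's as well.

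One intermediate step needs repair, however. Your claim that ``every component of $R^E$ differs from its flat-product limit by $O(|e|^2)$'' is false. The paper's explicit formula gives
\[
\Theta_{i\alpha j\beta}=\tfrac12 R_{\alpha\beta ij}-A_{ik\beta}A_{kj\alpha},
\]
where $R_{\alpha\beta ij}$ is the \emph{bundle} curvature and is $O(1)$, independent of $r$; similarly $\Theta_{ijk\alpha}=-\tfrac12 R_{\alpha\beta ij,k}v_\beta$ is only $O(r)$. So individual mixed sectional curvatures are \emph{not} close to the product. Your Ricci asymptotics are nonetheless correct, but the reason is that these $O(1)$ bundle-curvature terms are skew in $(\alpha,\beta)$ and in $(i,j)$, so they vanish under the symmetric contractions that occur in $\mathrm{Ric}^E(X,X)$, $\mathrm{Ric}^E(V,V)$, and $R^E(\,\cdot\,,N,\,\cdot\,,N)$ with $N=u_\alpha e_\alpha$. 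You should make this cancellation explicit rather than asserting a blanket $O(r^2)$ bound on $R^E$; as written there is a gap between premise and conclusion. The paper avoids the issue by computing the six Ricci components $\Psi_{ikjk},\Psi_{iaja},\Psi_{ikak},\Psi_{ibab},\Psi_{akbk},\Psi_{acbc}$ directly from the moving-frame curvature formulas and observing that only $A$-quadratic ($O(r^2)$) or $v$-linear ($O(r)$) terms survive alongside $\mathrm{Ric}^M_{ij}$ and $(n-2)/r^2$.
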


\begin{rem}\label{Positive scalar}
Actually, for sufficient small $r>0$, the sphere bundle $S_r(\xi)$ of radius $r$ with the induced metric in $E$ has positive scalar curvature without the assumption that $(M, ds^2_M)$ has positive Ricci curvature.
\end{rem}

\begin{rem}
Theorem \ref{Positive Ricci} and Remark \ref{Positive scalar} generalize Theorem $1$ and Theorem $3$ in \cite{KS00}(also see \cite{Na79}).
\end{rem}

As an application of Theorem \ref{Positive Ricci}, we can prove
\begin{thm}\label{main}
Assume $n\geq2$, and the closed manifold $M$ has a metric with positive Ricci curvature. Then the double manifold $D(\xi, M)$ admits
a metric with positive Ricci curvature, and meanwhile a natural isoparametric foliation.
\end{thm}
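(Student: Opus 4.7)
The plan is to recognize the double $D(\xi,M)$ as a single sphere bundle of higher rank and then invoke Theorem \ref{Positive Ricci}. Let $\underline{\mathbb{R}}$ denote the trivial real line bundle over $M$ and consider the stabilized bundle $\xi\oplus\underline{\mathbb{R}}$, of rank $n+1$. Its unit sphere bundle $S(\xi\oplus\underline{\mathbb{R}})$ will be diffeomorphic to $D(\xi,M)$: the hemisphere $\{(v,t):t\geq 0\}$ projects onto one copy of $B(\xi)$, the hemisphere $\{t\leq 0\}$ projects onto the other, and they are glued along the equator $\{t=0\}=S(\xi)$. I will make the smooth identification explicit via polar coordinates, sending $v=sw$ in the $\pm$-copy of $B(\xi)$ to $(\sin(\pi s/2)w,\pm\cos(\pi s/2))\in S(\xi\oplus\underline{\mathbb{R}})$; a brief check shows that this patches smoothly across both the equator $s=1$ and the zero sections $s=0$.

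With this identification in hand, I equip $\xi\oplus\underline{\mathbb{R}}$ with the direct-sum bundle metric and the direct-sum connection $\nabla^{\xi}\oplus d$, where $\nabla^{\xi}$ is any metric connection on $\xi$. Since the rank $n+1\geq 3$ and $M$ already carries a positive Ricci metric by hypothesis, Theorem \ref{Positive Ricci} applied to $\xi\oplus\underline{\mathbb{R}}$ furnishes, for all sufficiently small $r>0$, an isoparametric hypersurface $S_r(\xi\oplus\underline{\mathbb{R}})$ in the total space $E$ whose induced metric has positive Ricci curvature. Pulling this metric back through the diffeomorphism above produces the desired metric of positive Ricci curvature on $D(\xi,M)$.

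It then remains to exhibit an isoparametric foliation on $D(\xi,M)$ itself. The crucial observation is that the trivial connection on the $\underline{\mathbb{R}}$-summand makes the $\partial_t$-direction parallel, so the connection metric on $E$ is globally the Riemannian product of the connection metric on $\xi$ with the standard metric on $\mathbb{R}$. In particular the function $H(v,t)=t$ on $E$ is affine: $\nabla^E H=\partial_t$ and $\mathrm{Hess}^E H\equiv 0$. Setting $h=H|_{S_r}$, using the unit outward normal $\nu=(v+t\partial_t)/r$ (which is purely vertical), and invoking the constancy of the trace $\mathcal{H}(r)$ of the second fundamental form of $S_r$ in $E$ (part of Theorem \ref{Positive Ricci}), a short direct calculation will give
\[
|\nabla^{S_r}h|^2 \;=\; 1-\frac{h^2}{r^2}, \qquad \Delta^{S_r}h \;=\; -\frac{\mathcal{H}(r)}{r}\,h,
\]
both functions of $h$ alone. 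Hence $h$ is an isoparametric function on $S_r(\xi\oplus\underline{\mathbb{R}})\cong D(\xi,M)$, whose regular level sets $\{h=c\}$ ($|c|<r$) are the sphere bundles $S_{\sqrt{r^2-c^2}}(\xi)$ over $M$ and whose two focal submanifolds $\{h=\pm r\}$ are the two zero sections inside the two copies of $B(\xi)$ -- precisely the natural doubling foliation.

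The two design decisions that do the real work are: (i) stabilizing $\xi$ by a trivial line bundle, which upgrades the rank-$n\geq 2$ hypothesis of Theorem \ref{main} to the rank-$(n+1)\geq 3$ hypothesis required by Theorem \ref{Positive Ricci}; and (ii) choosing the trivial connection on the added summand, without which $H$ would fail to be affine on $E$ and $h$ would not descend to an isoparametric function on $S_r$. Once these two choices are in place, the argument is either a direct quotation of Theorem \ref{Positive Ricci} or a routine calculation, so the main conceptual obstacle is really the identification of the correct ambient object, namely the sphere bundle of the stabilized bundle endowed with the product-type connection metric.
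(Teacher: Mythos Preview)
Your proposal is correct and follows the same overall strategy as the paper: identify $D(\xi,M)$ with the sphere bundle $S(\xi\oplus\mathbf{1})$, apply Theorem~\ref{Positive Ricci} to the rank-$(n+1)$ bundle to obtain positive Ricci curvature, and use the height function in the trivial $\mathbf{1}$-summand as the isoparametric function on $S_{r_0}(\xi\oplus\mathbf{1})$. The paper even records the same transnormal equation $|\nabla f|^2=1-f^2/r_0^2$ that you obtain.

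The one genuine methodological difference is in the verification of the isoparametric property. The paper does not compute $\Delta f$; instead it writes the induced metric in the form $(\theta_i)^2+r_0^2\,dt^2+r_0^2\sin^2 t\,(dw_\alpha+w_\beta\omega_{\beta\alpha})^2$ and runs a moving-frame computation to show that each regular level $f^{-1}(c)$ has constant principal curvatures $0$ and $-\cot(t)/r_0$. You instead exploit the global Riemannian product splitting $E(\xi\oplus\mathbf{1})\cong E(\xi)\times\mathbb{R}$ (which the paper sets up but never states in those words) to see that $H$ is affine on $E$, and then obtain $\Delta^{S_r}h$ as a function of $h$ from the standard hypersurface restriction formula together with the constancy of the mean curvature of $S_r$ already established in Theorem~\ref{Positive Ricci}. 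Your route is shorter and more conceptual; the paper's moving-frame route yields the finer data of the individual principal curvatures of the leaves, which is more than is strictly needed to conclude that $h$ is isoparametric but makes the foliation structure completely explicit.
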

\begin{rem}
By the assumption that $M$ has a metric with positive Ricci curvature, it follows from Bonnet-Myers theorem that $\pi_1M$ is finite. Then, by Van Kampen's theorem,
$\pi_1D(\xi, M)$ is isomorphic to $\pi_1M$, and is also finite.
\end{rem}

At last, combining with Theorem A in \cite{GZ02} and Theorem \ref{main}, we can infer the following result, which improves Theorem 1.1 in \cite{TXY12}.
\begin{thm}\label{isoparametric case}
Let $Y^n$ be a compact isoparametric hypersurface in a unit sphere $S^{n+1}(1)$ with
$4$ distinct principal curvatures and multiplicities $(m_1, m_2)$(assume $m_1\leq m_2$), and $M_{\pm}$ the focal submanifolds of $Y^n$ with codimension $m_1+1$ and $m_2+1$ in $S^{n+1}(1)$ respectively.

\noindent (1).  $(m_1, m_2)=(1, k)$: The double manifold $D(S^{n+1}_+)$, associated with $M_+$ which is diffeomorphic to the Stiefel manifold $V_{k+2, 2}$, admits
a metric with positive Ricci curvature and a homogeneous isoparametric foliation.
However, the double manifold $D(S^{n+1}_-)$, associated with $M_-$ which is
diffeomorphic to $(S^1\times S^{k+1})/\mathbb{Z}_2$, only admits a metric
with nonnegative Ricci curvature and a homogeneous isoparametric foliation.

\noindent (2).  $2\leq m_1\leq m_2$: Both of the resulting double manifolds $D(S^{n+1}_+)$ and $D(S^{n+1}_-)$ admit metrics with positive Ricci curvature,
and also natural isoparametric foliations.
\end{thm}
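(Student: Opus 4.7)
The plan is to apply Theorem~\ref{main} to the bundles $\nu(M_\pm)$ of ranks $m_1+1$ and $m_2+1$ via the identification $D(S^{n+1}_\pm)\cong D(\nu(M_\pm), M_\pm)$. Since $m_1, m_2\geq 1$ in every case, the rank hypothesis $n\geq 2$ is automatic, so the analysis reduces to deciding, focal submanifold by focal submanifold, whether $M_\pm$ admits a metric of positive Ricci curvature.

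For case (1), $(m_1, m_2) = (1, k)$: On the $+$ side, $M_+\cong V_{k+2,2}$ is a homogeneous Stiefel manifold with finite fundamental group, and its standard normal homogeneous metric has positive Ricci curvature. Theorem~\ref{main} then yields a metric of positive Ricci on $D(S^{n+1}_+)$; homogeneity of the $(1,k)$ isoparametric hypersurface (Cartan) carries over to make the constructed foliation homogeneous. On the $-$ side, $M_-\cong (S^1\times S^{k+1})/\mathbb{Z}_2$ has $\pi_1\cong\mathbb{Z}$; by the van Kampen remark after Theorem~\ref{main}, $\pi_1(D(S^{n+1}_-))$ is also infinite, so positive Ricci is excluded by Bonnet--Myers. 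To obtain a metric of nonnegative Ricci curvature, I would use the cohomogeneity one $G$-action that $D(S^{n+1}_-)$ inherits from the homogeneous isoparametric foliation on $S^{n+1}(1)$, with two copies of $M_-$ as singular orbits, and invoke Theorem A of Grove--Ziller~\cite{GZ02} to produce an invariant metric with the desired Ricci property. The orbits of this action provide the required homogeneous isoparametric foliation.

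For case (2), $2\leq m_1\leq m_2$: both ranks $m_1+1$ and $m_2+1$ are at least $3$. The key input is that each focal submanifold $M_\pm$ of a $g=4$ isoparametric hypersurface with $m_1,m_2\geq 2$ admits a metric of positive Ricci curvature -- a fact one establishes from the known structure of the OT--FKM families and the two exceptional homogeneous examples, e.g.\ by a Gauss-equation check for the induced metric from $S^{n+1}(1)$ or by invoking results on the topology of these focal submanifolds. Once this is in place, Theorem~\ref{main} immediately supplies metrics of positive Ricci on both $D(S^{n+1}_\pm)$, with natural isoparametric foliations built from sphere bundles of the normal bundles together with their zero sections. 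The main obstacle is precisely this input in case (2) -- verifying positive Ricci on every focal submanifold across all relevant families of $g=4$ isoparametric hypersurfaces; a secondary subtlety is matching the exact formulation of Grove--Ziller's Theorem A to $D(S^{n+1}_-)$ in case (1), since the singular orbits there have codimension $k+1$ and one may need to unpack the cohomogeneity one structure by hand to justify the nonnegative Ricci metric when $k\geq 2$.
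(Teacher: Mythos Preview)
Your overall plan matches the paper's, but the ``main obstacle'' you flag in case~(2) dissolves with a two-line uniform argument that you do not carry out. By M\"unzner, each focal submanifold $M_\pm$ is minimal in $S^{n+1}(1)$ and, for every unit normal $\eta$, the shape operator $A_\eta$ has principal curvatures $1,-1,0$. The Gauss equation then gives, for any unit tangent $X$ to $M_+$ (which has $m_1+1$ normal directions),
\[
\mathrm{Ric}^{M_+}(X,X)=(m_1+2m_2-1)-\sum_{\alpha}\langle A_\alpha X,A_\alpha X\rangle\ \ge\ (m_1+2m_2-1)-(m_1+1)=2(m_2-1)>0,
\]
and symmetrically for $M_-$. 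No case-by-case analysis of OT--FKM families or exceptional examples is needed; positivity of Ricci on the \emph{induced} metric holds uniformly once $m_1,m_2\ge 2$, and Theorem~\ref{main} finishes case~(2) exactly as you say.

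In case~(1) your route for $D(S^{n+1}_+)$ differs from the paper's. The paper does \emph{not} apply Theorem~\ref{main} here; instead it treats both $D(S^{n+1}_\pm)$ via the cohomogeneity one group diagrams $H\subset K_\pm,K_\pm\subset G$ inherited from the isotropy representation of $(\mathrm{SO}(k+4),\mathrm{SO}(2)\times\mathrm{SO}(k+2))$ (this $(1,k)$ family is homogeneous by Takagi, not Cartan), and invokes Grove--Ziller's Theorem~A on both sides. Your approach via Theorem~\ref{main} does produce positive Ricci on $D(S^{n+1}_+)$, but your claim that ``homogeneity \ldots\ carries over'' to make the resulting foliation homogeneous needs justification: the foliation built in Theorem~\ref{main} comes from a connection metric on $\nu(M_+)\oplus\mathbf{1}$, and you would have to argue that this metric can be chosen $G$-invariant so that the sphere-bundle leaves are $G$-orbits. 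The paper's use of Grove--Ziller sidesteps this, since the invariant metric and the orbit foliation are delivered simultaneously.

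Finally, your ``secondary subtlety'' about matching Grove--Ziller to $D(S^{n+1}_-)$ is not an issue: the nonnegative Ricci part of their Theorem~A holds for every closed cohomogeneity one manifold with no codimension hypothesis, and the positive Ricci part is equivalent to finiteness of $\pi_1$, which you have already ruled out for $D(S^{n+1}_-)$.
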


\begin{rem}
For case $(m_1, m_2)=(1, k)$, by Van Kampen's theorem, $\pi_1 D(S^{n+1}_-)=\pi_1 (S^1\times S^{k+1})/\mathbb{Z}_2=\mathbb{Z}$(cf. \cite{CR85}, \cite{QT14a}). Hence, according to Bonnet-Myers theorem, $D(S^{n+1}_-)$ can not admit a metric with positive Ricci curvature in this case.
\end{rem}

\begin{rem}
The natural isoparametric foliations constructed in Theorem \ref{isoparametric case} are foliated diffeomorphic to the ones constructed
in Theorem 1.1 of \cite{TXY12}. For the definition of foliated diffeomorphism, we refer to \cite{Ge14} and \cite{GQ15}.
\end{rem}
%\begin{rem}
%Based on Remark 1.1, one can give another proof for Theorem 1.1 in \cite{TXY12}, without using Schoen-Yau and Gromov-Lawson surgery theory.
%\end{rem}

\section{Geometry of vector bundles and associated sphere bundles}
In this paper, we shall make use of the following convention on the ranges of indices:
$$1\leq A, B, C,...\leq m+n, 1\leq i, j, k,...\leq m,$$
$$ m+1\leq \alpha, \beta, \gamma,...\leq m+n, m+1\leq a, b, c,...\leq m+n-1.$$

Generally, let $(N, ds^2_N)$ be a Riemannian manifold of dimension $m+n$. Choose a local orthonormal frame $e_1,\cdots, e_{m+n}$, and let
$\theta_1,\cdots, \theta_{m+n}$
be the dual frame such that $ds^2_N=\sum_{A=1}^{m+n}(\theta_{A})^2$. Then we have the first and second structural equations:
\begin{equation}\label{ab}
\left\{ \begin{array}{ll}
d\theta_A=\theta_{AB}\wedge \theta_B, \theta_{AB}+\theta_{BA}=0,\nonumber\\
d\theta_{AB}=\theta_{AC}\wedge\theta_{CB}-\Theta_{AB},\nonumber
\end{array}\right.
\end{equation}
where $\theta_{AB}$ are the connection $1$-forms, and $\Theta_{AB}=\frac{1}{2}K_{ABCD}\theta_C\wedge \theta_D$
are the curvature $2$-forms with $K_{ABCD}$ the components of the curvature tensor.

In this section, we will consider the geometry of vector bundles and sphere bundles. Let $(M, ds_M^2)$ be a $m$-dimensional closed Riemannian manifold, and $\xi$ a Riemannian vector bundle of rank $n$ over $M$ with total space
$E$ and projection
$\pi: E\rightarrow M$. For our purpose, we fix a metric
$\langle\cdot,\cdot\rangle$ on $\xi$ and choose a connection $D$ compatible with $\langle\cdot,\cdot\rangle$.

Let $U$ be an open neighborhood in $M$, and $ds_M^2|_U=\sum_{i=1}^{m}(\omega_i)^2$,
where $\omega_1,..., \omega_m$ are $1$-forms on $U$. Then the structural equations of $(M, ds^2_M)$ are given by
\begin{equation}\label{ab}
\left\{ \begin{array}{ll}
d\omega_i=\omega_{ij}\wedge \omega_j, \omega_{ij}+\omega_{ji}=0,\nonumber\\
d\omega_{ij}=\omega_{ik}\wedge\omega_{kj}-\Omega_{ij},\nonumber
\end{array}\right.
\end{equation}
where $\omega_{ij}$ are connection $1$-forms, and  $\Omega_{ij}=\frac{1}{2}R_{ijkl}\omega_k\wedge \omega_l$ are the curvature $2$-forms with
$R_{ijkl}$ the components of the curvature tensor.

Assume $U$ is small enough such that $\xi|_U$ is trivial, we can choose orthonormal cross sections
$e_{m+1},...,e_{m+n}$ on $U$ for $\xi$, i.e. $\langle e_{\alpha},e_{\beta}\rangle=\delta_{\alpha\beta}$, and $\Psi_U:U\times \mathbb{R}^n\rightarrow \pi^{-1}(U)$ is the local trivialization given by
$\Psi_U(p, v_1,... , v_n)=\sum_{\alpha=m+1}^{m+n}v_{\alpha}e_{\alpha}(p)$.
Then $De_{\alpha}=\omega_{\alpha\beta}e_{\beta},$
where $(\omega_{\alpha\beta})$ is the matrix expression of the connection $D$. Note that $D$ is compatible with
$\langle\cdot,\cdot\rangle$ means that $(\omega_{\alpha\beta})$ is skew-symmetric. The curvature $2$-forms $\Omega_{\alpha\beta}$ of the connection $D$ are determined by $d\omega_{\alpha\beta}=\omega_{\alpha\gamma}\wedge\omega_{\gamma\beta}-
\Omega_{\alpha\beta}$, where $\Omega_{\alpha\beta}=\frac{1}{2}R_{\alpha\beta i j}\omega_i\wedge\omega_j$.

Define $\theta_i:=\pi^*\omega_i$ and $\theta_{\alpha}:=dv_{\alpha}
+v_{\beta}\omega_{\beta\alpha}$.
Then $ds^2_{\xi}=(\theta_i)^2+(\theta_{\alpha})^2$
is a well-defined Riemannian metric on $E$, i.e., the so-called \emph{connection metric}(cf. \cite{Je73}, \cite{Na79}).
For convenience, we give the formulae of connection $1$-forms $\theta_{AB}$ and curvature
$2$-forms $\Theta_{AB}$ of the connection metric in the following by making use of the moving frame method.
\begin{prop}\label{connection metric}
The connection $1$-forms $\theta_{AB}$ are determined by
\begin{eqnarray}
\theta_{ij}&=&\pi^{*}\omega_{ij}+A_{ij\alpha}\theta_{\alpha},\nonumber\\
\theta_{i\alpha}&=&A_{ij\alpha}\theta_j,\nonumber\\
\theta_{\alpha\beta}&=&\omega_{\alpha\beta},\nonumber
\end{eqnarray}
and the curvature $2$-forms $\Theta_{AB}$ are given by
\begin{eqnarray}
\Theta_{ij}&=&\Omega_{ij}-A_{ik\alpha}A_{jl\alpha}\theta_k\wedge\theta_l
-A_{ij\alpha}A_{kl\alpha}\theta_k\wedge\theta_l
-\frac{1}{2}R_{\alpha\beta ij,k}v_{\beta}\theta_{k}\wedge\theta_{\alpha}+\frac{1}{2}R_{\alpha\beta ij}\theta_{\alpha}\wedge\theta_{\beta}\nonumber\\
&&+A_{ik\alpha}A_{kj\beta}\theta_{\alpha}
\wedge\theta_{\beta},
\nonumber\\
\Theta_{i\alpha}&=&\frac{1}{2}R_{\alpha\beta ij,k}v_{\beta}\theta_{j}\wedge\theta_{k}+\frac{1}{2}R_{\alpha\beta ij}\theta_{j}\wedge\theta_{\beta}-A_{ik\beta}A_{kj\alpha}\theta_j
\wedge\theta_{\beta},
\nonumber\\
\Theta_{\alpha\beta}&=&-A_{ij\alpha}A_{ik\beta}\theta_j\wedge\theta_k
+\Omega_{\alpha\beta},\nonumber
\end{eqnarray}
where $A_{ij\alpha}=\frac{1}{2}R_{\alpha\beta ij}v_{\beta}$, and
$R_{\alpha\beta ij,k}$ is defined by $R_{\alpha\beta ij,k}\omega_k:=
dR_{\alpha\beta ij}+R_{\gamma\beta ij}\omega_{\gamma\alpha}+R_{\alpha\gamma ij}\omega_{\gamma\beta}+R_{\alpha\beta lj}\omega_{li}+R_{\alpha\beta il}\omega_{lj}$.
\end{prop}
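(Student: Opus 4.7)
The plan is to apply Cartan's moving-frame method in two stages: first identify the connection 1-forms $\theta_{AB}$ from the first structural equation together with skew-symmetry (using uniqueness of the Levi-Civita connection), then read off the curvature 2-forms from the second structural equation.

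First I would compute the exterior derivatives of the orthonormal coframe $\{\theta_i,\theta_\alpha\}$ on $E$. Since $\theta_i=\pi^{*}\omega_i$, pulling back $d\omega_i=\omega_{ij}\wedge\omega_j$ gives $d\theta_i=\pi^{*}\omega_{ij}\wedge\theta_j$. For $\theta_\alpha=dv_\alpha+v_\beta\omega_{\beta\alpha}$, applying $d$ and substituting $dv_\beta=\theta_\beta-v_\gamma\omega_{\gamma\beta}$ together with $d\omega_{\beta\alpha}=\omega_{\beta\gamma}\wedge\omega_{\gamma\alpha}-\Omega_{\beta\alpha}$ produces two cubic-in-$v$ terms of opposite sign that cancel, yielding
\begin{equation*}
d\theta_\alpha=\omega_{\alpha\beta}\wedge\theta_\beta+A_{ij\alpha}\theta_i\wedge\theta_j,
\end{equation*}
where $A_{ij\alpha}=\tfrac{1}{2}R_{\alpha\beta ij}v_\beta$ arises naturally from $-v_\beta\Omega_{\beta\alpha}$. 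Plugging the proposed expressions for $\theta_{ij}$, $\theta_{i\alpha}$, $\theta_{\alpha\beta}$ into $\theta_{AB}\wedge\theta_B$ and using the skew-symmetry of $A_{ij\alpha}$ in $i,j$ inherited from $R_{\alpha\beta ij}=-R_{\alpha\beta ji}$, I would verify that both the first structural equation and $\theta_{AB}+\theta_{BA}=0$ hold; uniqueness of the Levi-Civita connection then pins down the stated formulae.

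For the curvature 2-forms I would differentiate each $\theta_{AB}$ and form $\theta_{AC}\wedge\theta_{CB}$ block by block in the three types $(ij)$, $(i\alpha)$, $(\alpha\beta)$. The crucial ingredient is $dA_{ij\alpha}$: the given definition of $R_{\alpha\beta ij,k}$ is used to solve for $dR_{\alpha\beta ij}$ as $R_{\alpha\beta ij,k}\omega_k$ minus the connection correction terms, and this is then combined with $dv_\beta=\theta_\beta-v_\gamma\omega_{\gamma\beta}$. The non-tensorial pieces involving $\omega_{\gamma\alpha}$ and $\omega_{ij}$ produced by $dA_{ij\alpha}$ cancel against the corresponding contributions from $\theta_{AC}\wedge\theta_{CB}$, leaving horizontal ($\theta_k\wedge\theta_l$), mixed ($\theta_j\wedge\theta_\beta$), and vertical ($\theta_\alpha\wedge\theta_\beta$) contributions that reassemble into the stated formulae for $\Theta_{ij}$, $\Theta_{i\alpha}$, $\Theta_{\alpha\beta}$.

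The main obstacle is purely computational: Step 3 requires careful bookkeeping of many terms and repeated use of the skew-symmetries of $R_{\alpha\beta ij}$ and $A_{ij\alpha}$, together with the covariant-derivative identity packaged into the definition of $R_{\alpha\beta ij,k}$. No conceptual difficulty remains, since the definition of $R_{\alpha\beta ij,k}$ is engineered precisely so that $dA_{ij\alpha}$ splits into a tensorial part plus connection pieces that absorb the remaining non-invariant terms produced by the wedge products; it is only a matter of verifying this cancellation term by term.
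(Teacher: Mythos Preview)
Your proposal is correct and follows precisely the approach indicated by the paper, which simply states ``By using the first and second structural equations and a direct computation.'' You have supplied the details the paper omits: computing $d\theta_i$, $d\theta_\alpha$ to pin down the connection $1$-forms via the first structural equation and skew-symmetry, and then unwinding $dA_{ij\alpha}$ through the covariant-derivative identity for $R_{\alpha\beta ij,k}$ to extract the curvature $2$-forms from the second structural equation.
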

\begin{proof}

%\begin{eqnarray}
%d\theta_{i}&=&\theta_{ij}\wedge\theta_{j}+\theta_{i\alpha}\wedge\theta_{\alpha},\nonumber\\
%d\theta_{\alpha}&=&\theta_{\alpha %i}\wedge\theta_{i}+\theta_{\alpha\beta}\wedge\theta_{\beta},\nonumber
%\end{eqnarray}

%\begin{eqnarray}
%d\theta_{ij}&=&\theta_{ik}\wedge\theta_{kj}+\theta_{i\alpha}\wedge\theta_{\alpha %j}-\Theta_{ij},\nonumber\\
%d\theta_{i\alpha}&=&\theta_{ij}\wedge\theta_{j\alpha}+\theta_{i\beta}\wedge\theta_{\beta %\alpha}-\Theta_{i\alpha},\nonumber\\
%d\theta_{\alpha\beta}&=&\theta_{\alpha %i}\wedge\theta_{i\beta}+\theta_{\alpha\gamma}\wedge\theta_{\gamma\beta}
%-\Theta_{\alpha\beta}.\nonumber
%\end{eqnarray}
By using the first and second structural equations and a direct computation.
\end{proof}

As a consequence, we can obtain the components of curvature tensor, which are defined by $\Theta_{AB}:=\frac{1}{2}\Theta_{ABCD}\theta_C\wedge\theta_D$.
\begin{prop}
The components of the curvature $2$-forms $\Theta_{AB}$ are given by
\begin{eqnarray}
\Theta_{ijkl}&=&R_{ijkl}-A_{ik\alpha}A_{jl\alpha}+A_{il\alpha}A_{jk\alpha}
-2A_{ij\alpha}A_{kl\alpha},\nonumber\\
\Theta_{ijk\alpha}&=&-\frac{1}{2}R_{\alpha\beta ij,k}v_{\beta},\nonumber\\
\Theta_{ij\alpha\beta}&=&R_{\alpha\beta ij}+A_{ik\alpha}A_{kj\beta}-A_{ik\beta}A_{kj\alpha},\nonumber\\
\Theta_{i\alpha j\beta}&=&\frac{1}{2}R_{\alpha\beta ij}-A_{ik\beta}A_{kj\alpha},\nonumber\\
\Theta_{i\alpha\beta\gamma}&=&0,
~~\Theta_{\alpha\beta\gamma\sigma}~~~=~~~0.\nonumber
\end{eqnarray}
\end{prop}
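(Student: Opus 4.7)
The plan is to read off the components $\Theta_{ABCD}$ directly from the expressions for the curvature $2$-forms $\Theta_{AB}$ in Proposition \ref{connection metric}, via the defining relation $\Theta_{AB}=\frac{1}{2}\Theta_{ABCD}\theta_C\wedge\theta_D$ together with the antisymmetry $\Theta_{ABCD}=-\Theta_{ABDC}$.

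First I would group each $\Theta_{AB}$ from Proposition \ref{connection metric} into blocks according to the types of the two indices in $\theta_C\wedge\theta_D$: horizontal--horizontal ($\theta_k\wedge\theta_l$), horizontal--vertical ($\theta_k\wedge\theta_\alpha$) and vertical--vertical ($\theta_\alpha\wedge\theta_\beta$). Then for every monomial $f_{CD}\,\theta_C\wedge\theta_D$ inside a given block I would check whether $f_{CD}$ is already antisymmetric in $(C,D)$: if so its contribution to $\Theta_{ABCD}$ is $2f_{CD}$, while if not the contribution is $f_{CD}-f_{DC}$.

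The only step that is mildly delicate is the horizontal--horizontal block of $\Theta_{ij}$. Here the term $-A_{ij\alpha}A_{kl\alpha}\theta_k\wedge\theta_l$ carries a coefficient that is automatically antisymmetric in $k,l$, because $A_{kl\alpha}=\frac{1}{2}R_{\alpha\beta kl}v_\beta=-A_{lk\alpha}$, and hence contributes $-2A_{ij\alpha}A_{kl\alpha}$ to $\Theta_{ijkl}$. On the other hand $-A_{ik\alpha}A_{jl\alpha}\theta_k\wedge\theta_l$ has no built-in $k\leftrightarrow l$ symmetry, and antisymmetrization produces $-A_{ik\alpha}A_{jl\alpha}+A_{il\alpha}A_{jk\alpha}$. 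Combined with the $R_{ijkl}$ coming from $\pi^{*}\Omega_{ij}=\frac{1}{2}R_{ijkl}\theta_k\wedge\theta_l$, this yields the stated formula for $\Theta_{ijkl}$. The same bookkeeping applied to the $\theta_\alpha\wedge\theta_\beta$ piece $A_{ik\alpha}A_{kj\beta}\theta_\alpha\wedge\theta_\beta$ (whose coefficient is not antisymmetric in $\alpha,\beta$) produces the $A_{ik\alpha}A_{kj\beta}-A_{ik\beta}A_{kj\alpha}$ inside $\Theta_{ij\alpha\beta}$, while the mixed-type pieces that give $\Theta_{ijk\alpha}$ and $\Theta_{i\alpha j\beta}$ require no antisymmetrization at all, since $\theta_k\wedge\theta_\alpha$ and $\theta_j\wedge\theta_\beta$ already pair indices of different types.

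Finally, the vanishing identities $\Theta_{i\alpha\beta\gamma}=0$ and $\Theta_{\alpha\beta\gamma\sigma}=0$ follow by inspection: the expressions for $\Theta_{i\alpha}$ and $\Theta_{\alpha\beta}$ in Proposition \ref{connection metric} contain no $\theta_\beta\wedge\theta_\gamma$, respectively no $\theta_\gamma\wedge\theta_\sigma$, wedge products. There is no conceptual obstacle here; the main risk is losing a factor of $2$ or a sign during one of the antisymmetrizations, so I would carry out the computation one block at a time rather than trying to read off all components in a single pass.
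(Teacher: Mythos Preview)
Your proposal is correct and is precisely the approach the paper takes: the proposition is stated as a direct consequence of Proposition~\ref{connection metric}, obtained by reading off coefficients from the defining relation $\Theta_{AB}=\frac{1}{2}\Theta_{ABCD}\theta_C\wedge\theta_D$. Your careful treatment of the antisymmetrizations (in particular noting that $A_{kl\alpha}$ is already antisymmetric in $k,l$ while $A_{ik\alpha}A_{jl\alpha}$ and $A_{ik\alpha}A_{kj\beta}$ are not) fills in exactly the bookkeeping that the paper leaves implicit.
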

%we choose $\varphi_{m+n}=dr, \varphi_i=\theta_{i}$ and $\varphi_{a}$ such that
%$$\sum_{a=m+1}^{m+n-1}\varphi_{a}^2=\sum_{\alpha=m+1}^{m+n}(du_{\alpha}
%+\sum_{\beta=m+1}^{m+n}u_{\beta}\omega_{\beta\alpha})^2.$$
%Then, by $d\varphi_{m+n}=0$ and the first structural equations of the moving frame $\{\varphi_{i}, \varphi_{a}, \varphi_{m+n}\}$, we have the connection forms
%$\varphi_{m+ni}=0, \varphi_{m+na}=\frac{1}{r}\varphi_{a}$. And the result follows.

Define $S_r(\xi):=\{(p,v)\in E~|~v\in \pi^{-1}(p), \langle v,v\rangle=r^2\}$, the associated
sphere bundle of radius $r>0$. At present, we are in the position to prove Theorem \ref{Positive Ricci}.

\noindent\textbf{Proof of Theorem \ref{Positive Ricci}:}
\begin{proof}
At first, observe that the function $f: E\rightarrow \mathbb{R}, (p,v)\mapsto \langle v, v\rangle$ is a transnormal function(cf. Theorem 2.2 in \cite{QT15}).
Next, we need to compute the principal curvatures of $S_r(\xi)\subset E$ for any $r> 0$. On $\pi^{-1}(U)$, $r^2=\sum_{\alpha}v_{\alpha}^2$. Define $u_{\alpha}:=\frac{v_{\alpha}}{r}$.
For $r>0$, $\eta=u_{\alpha}\frac{\partial}{\partial v_{\alpha}}$ is a natural
global normal vector field of $S_{r}(\xi)\subset E$.  In order to study the
extrinsic geometry of $S_r(\xi)$, we introduce the orthogonal transformation $P: \mathbb{R}^n\rightarrow \mathbb{R}^n$ defined by
$$(P_{\alpha\beta})=\left(
\begin{array}{cc}
I_{n-1}-\frac{x^Tx}{1-x_n} & x^T \\
x & x_n\\
\end{array}\right),$$
where $x=(u_{m+1},\ldots, u_{m+n-1})$ and $x_n=u_{m+n}$.
%which satifies $P=P^T$ and $P^2=I_n$.
Choose the moving frame $\{\varphi_{i}, \varphi_{a}, \varphi_{m+n}\}$ by
$\varphi_{i}=\theta_i$ and $\varphi_{\alpha}=P_{\alpha\beta}\theta_{\beta}$. Particularly, $\varphi_{m+n}=u_{\alpha}\theta_{\alpha}=dr$. Therefore, $\{\varphi_{i}, \varphi_{a}, \varphi_{m+n}\}$ is exactly the adapted moving frame with respect to $S_r(\xi)\subset E$. The associated connection $1$-forms $(\varphi_{AB})$ are given by
$$(\varphi_{AB})=\left(
\begin{array}{cc}
\varphi_{ij} & \varphi_{i\alpha} \\
\varphi_{\alpha i} & \varphi_{\alpha\beta}\\
\end{array}\right)=
\left(
\begin{array}{cc}
\theta_{ij} & \theta_{i\beta}P_{\beta\alpha} \\
P_{\alpha\beta}\theta_{\beta i} & dP_{\alpha\gamma}P_{\gamma\beta}+P_{\alpha\gamma}\theta_{\gamma\delta}P_{\delta\beta}\\
\end{array}\right).$$
It follows that $\varphi_{im+n}=0$ and $\varphi_{am+n}=-\frac{\varphi_{a}}{r}$. Therefore, for each $r>0$, $S_r(\xi)$ is an isoparametric hypersurface in $E$ with constant principal curvatures $0$ and $-\frac{1}{r}$ with multiplicities $m$ and $n$, respectively.

Now, under the assumption $(M, ds^2_M)$ has positive Ricci curvature and $n\geq3$, we will prove the induced metric of $S_r(\xi)\subset (E, ds^2_{\xi})$ has positive Ricci curvature for $r>0$ small enough. To compute the Ricci curvatures of $S_r(\xi)$, choose the moving frame $\{\varphi_{i}, \varphi_{a}, \varphi_{m+n}\}$ of $(E, ds^2_{\xi})$, and define $\Phi_{AB}:=\frac{1}{2}K_{ABCD}\varphi_{C}\wedge\varphi_{D}$ for $1\leq A, B, C, D\leq m+n$, where $\Phi_{AB}$ are curvature $2$-forms of $\{\varphi_{i}, \varphi_{a}, \varphi_{m+n}\}$. Then
$$(\Phi_{AB})=\left(
\begin{array}{cc}
\Phi_{ij} & \Phi_{i\alpha} \\
\Phi_{\alpha i} & \Phi_{\alpha\beta}\\
\end{array}\right)=
\left(
\begin{array}{cc}
\Theta_{ij} & \Theta_{i\beta}P_{\beta\alpha} \\
P_{\alpha\beta}\Theta_{\beta i} & P_{\alpha\gamma}\Theta_{\gamma\delta}P_{\delta\beta}\\
\end{array}\right).$$
Define $\psi_i=\varphi_i|_{S_r(\xi)}, \psi_a=\varphi_a|_{S_r(\xi)}$, then $\{\psi_i, \psi_a\}$ is a moving frame for $S_r(\xi)$. Let $\psi_{ij}, \psi_{ia}, \psi_{ab}$ be the associated connection $1$-forms, and $\Psi_{ij}, \Psi_{ia}, \Psi_{ab}$ the associated curvature $2$-forms respectively. By the Gauss equations of $S_r(\xi)\subset E$, it follows that
\begin{eqnarray}
\Psi_{ij}&=&\Phi_{ij}|_{S_r(\xi)}+\varphi_{im+n}\wedge\varphi_{jm+n}|_{S_r(\xi)},\nonumber\\
\Psi_{ia}&=&\Phi_{ia}|_{S_r(\xi)}+\varphi_{im+n}\wedge\varphi_{am+n}|_{S_r(\xi)},\nonumber\\
\Psi_{ab}&=&\Phi_{ab}|_{S_r(\xi)}+\varphi_{am+n}\wedge\varphi_{bm+n}|_{S_r(\xi)}.\nonumber
\end{eqnarray}
Assume $\Psi_{ij}=\frac{1}{2}\Psi_{ijkl}\psi_k\wedge\psi_l+\Psi_{ijka}\psi_k\wedge\psi_a+\frac{1}{2}\Psi_{ijab}\psi_a\wedge\psi_b,$
$\Psi_{ia}=\frac{1}{2}\Psi_{iakl}\psi_k\wedge\psi_l+\Psi_{iakb}\psi_k\wedge\psi_b+\frac{1}{2}\Psi_{iabc}\psi_b\wedge\psi_c,$
and $\Psi_{ab}=\frac{1}{2}\Psi_{abkl}\psi_k\wedge\psi_l+\Psi_{abkc}\psi_k\wedge\psi_c+\frac{1}{2}\Psi_{abcd}\psi_c\wedge\psi_d.$
According to Proposition \ref{connection metric}, the Ricci curvature  tensor $\mathrm{Ric}^{S_r}$ of $S_r(\xi)$ is given by
\begin{eqnarray}
\mathrm{Ric}^{S_r}_{ij}&=&\Psi_{ikjk}+\Psi_{iaja},\nonumber\\
\mathrm{Ric}^{S_r}_{ia}&=&\Psi_{ikak}+\Psi_{ibab},\nonumber\\
\mathrm{Ric}^{S_r}_{ab}&=&\Psi_{akbk}+\Psi_{acbc},\nonumber
\end{eqnarray}
where
\begin{eqnarray}
\Psi_{ikjk}&=&R_{ikjk}-3A_{ik\alpha}A_{jk\alpha},\nonumber\\
\Psi_{iaja}&=&A_{ik\alpha}A_{jk\beta}P_{\alpha a}P_{\beta a},\nonumber\\
\Psi_{ikak}&=&\frac{1}{2}R_{\alpha\beta ik,k}v_{\beta}P_{\alpha a},\nonumber\\
\Psi_{ibab}&=&0,\nonumber\\
\Psi_{akbk}&=&A_{km\alpha}A_{km\beta}P_{\alpha a}P_{\beta b},\nonumber\\
\Psi_{acbc}&=&\frac{1}{r^2}\delta_{ab}(n-2).\nonumber
\end{eqnarray}
Observe that $A_{ij\alpha}=\frac{1}{2}R_{\alpha\beta ij}v_{\beta}=\frac{1}{2}rR_{\alpha\beta ij}u_{\beta}$, and
the norms of $R_{\alpha\beta ij}$, $R_{\alpha\beta ij,k}$ are bounded on $M$, since $M$ is a closed Riemannian manifold.
By the assumption $M$ has positive Ricci curvature, it follows that, for $r> 0$ small enough,
the Ricci curvature tensor $\mathrm{Ric}^{S_r}$ is positive definite. And the proof is complete.
\end{proof}

\section{Ricci curvature and isoparametric foliation on double manifold}
In this section, we will prove Theorem \ref{main}. The key observation is the following fact in \cite{TXY12}.
\begin{prop}(\cite{TXY12})
Let $\xi$ be a vector bundle of rank $n$ over $M$ of dimension $m$ and $D(\xi, M)$ the double manifold by gluing two copies of
disc bundle of $\xi$. Then $D(\xi, M)$ is diffeomorphic to $S(\xi \oplus \mathbf{1})$, where $S(\xi \oplus \mathbf{1})$
is the sphere bundle of the Whitney sum between $\xi$ and a trivial line bundle $\mathbf{1}$.
\end{prop}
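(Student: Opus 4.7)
The plan is to give an explicit fiberwise diffeomorphism $\Phi\colon D(\xi,M)\to S(\xi\oplus\mathbf{1})$ based on the elementary fact that the closed $n$-disc, doubled along its boundary $S^{n-1}$, is $S^n$. The key is to realize each of the two disc bundles that build $D(\xi,M)$ as one of the hemisphere bundles cut out of $S(\xi\oplus\mathbf{1})$ by the sign of the trivial $\mathbf{1}$ factor, and to do so via trigonometric formulas that are manifestly smooth across the gluing locus.

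First I would define two maps $\Phi_\pm\colon B(\xi)\to S(\xi\oplus\mathbf{1})$ by the fiberwise formula
\[
\Phi_\pm(v)\;=\;\Bigl(\tfrac{\sin(\pi|v|/2)}{|v|}\,v,\ \pm\cos(\pi|v|/2)\Bigr)\in\xi_p\oplus\mathbb{R}
\]
for $v\in B(\xi)_p$. Since $\sin(t)/t$ and $\cos(t)$ are even smooth functions (expressible as power series in $t^2$), both components depend smoothly on $v\in\xi_p$ up to and including $v=0$. I would then verify that the image lies in $S(\xi\oplus\mathbf{1})$ because $\sin^2+\cos^2=1$; that $\Phi_+$ maps $B(\xi)$ bijectively onto the closed upper hemisphere bundle $\{t\ge 0\}$ and $\Phi_-$ onto the lower; and that the two maps agree on $S(\xi)=\partial B(\xi)$, since $|v|=1$ forces $\Phi_\pm(v)=(v,0)$. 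This common boundary value glues them to a continuous bijection $\Phi\colon D(\xi,M)\to S(\xi\oplus\mathbf{1})$.

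The step I expect to require the most care, and the reason for choosing these trigonometric formulas over, say, stereographic projection, is checking smoothness of $\Phi$ across the equator $S(\xi)\subset D(\xi,M)$. To this end I would parametrize a neighborhood of $S(\xi)$ in the double by the collar chart $S(\xi)\times(-\epsilon,\epsilon)\to D(\xi,M)$ sending $(x,s)$ to $(1-s)x$ in the first copy when $s\ge 0$ and to $(1+s)x$ in the second copy when $s\le 0$; this chart defines the canonical smooth structure on $D(\xi,M)$. A short calculation using $\sin(\tfrac{\pi}{2}\pm\tfrac{\pi s}{2})=\cos(\tfrac{\pi s}{2})$ and $\cos(\tfrac{\pi}{2}\pm\tfrac{\pi s}{2})=\mp\sin(\tfrac{\pi s}{2})$ will then show that in these coordinates both $\Phi_+$ and $\Phi_-$ are given by the single formula
\[
\Phi(x,s)\;=\;\bigl(\cos(\tfrac{\pi s}{2})\,x,\ \sin(\tfrac{\pi s}{2})\bigr),
\]
which is smooth on both sides of $\{s=0\}$.

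Finally, $\Phi$ will be smooth on each closed half-bundle by construction and smooth across the equator by the above, hence smooth on $D(\xi,M)$. Since $\Phi$ intertwines the bundle projections to $M$, its differential respects the vertical-horizontal decomposition, and its vertical part is fiberwise a diffeomorphism $D^n\to\{\text{hemisphere of }S^n\}$; hence $d\Phi$ is an isomorphism everywhere. A smooth bijection with everywhere-invertible differential between compact manifolds of the same dimension is a diffeomorphism, yielding $D(\xi,M)\cong S(\xi\oplus\mathbf{1})$.
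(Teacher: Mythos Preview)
Your argument is correct. Note, however, that the present paper does not supply its own proof of this proposition: it is quoted as a known fact from \cite{TXY12} and then used as input for the proof of Theorem~\ref{main}. There is therefore nothing in this paper to compare your route against.

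That said, your explicit fiberwise construction is a clean, self-contained justification of the cited result. The maps $\Phi_\pm$ realize each copy of the disc bundle as a hemisphere bundle, the trigonometric choice makes the collar computation $(x,s)\mapsto\bigl(\cos(\tfrac{\pi s}{2})\,x,\ \sin(\tfrac{\pi s}{2})\bigr)$ transparent and verifies $C^\infty$-compatibility across the equator, and the fact that $\Phi$ is a bundle map over $\mathrm{id}_M$ correctly reduces invertibility of $d\Phi$ to the fiberwise statement. One minor remark: your final appeal to compactness is unnecessary, since any smooth bijection with everywhere-invertible differential between manifolds (of the same dimension, without boundary) is already a diffeomorphism by the inverse function theorem.
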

For convenience, denote the total space of $\xi \oplus \mathbf{1}$ by $E(\xi \oplus \mathbf{1})$ and the projection also by $\pi: E(\xi \oplus \mathbf{1})
\rightarrow M$. As in Section 2, we fix a metric $\langle\cdot,\cdot\rangle$ on $\xi$ and choose a connection $D$ compatible with $\langle\cdot,\cdot\rangle$.
At present, for the vector bundle $\xi \oplus \mathbf{1}$, it is natural to extend the metric $\langle\cdot,\cdot\rangle$ on $\xi$
to $\xi \oplus \mathbf{1}$ such that $\xi \oplus \mathbf{1}$ is an orthogonal Whitney sum, denoted also by $\langle\cdot,\cdot\rangle$.
Moreover, choose a unit section $e_{m+n+1}$ for
the trivial line bundle $\mathbf{1}$. By demanding that $e_{m+n+1}$ is parallel, we extend naturally the connection $D$ on $\xi$ to a connection
on $\xi \oplus \mathbf{1}$, denoted also by $D$.

Assume $U$ is small enough such that $\xi|_U$ is trivial, we can choose orthonormal cross sections
$e_{m+1},...,e_{m+n}$ on $U$ for $\xi$. Then $e_{m+1},...,e_{m+n}, e_{m+n+1}$ is an orthonormal cross sections
on $U$ for $\xi \oplus \mathbf{1}$. Moreover, for $m+1\leq \alpha, \beta\leq m+n$, $De_{\alpha}=\omega_{\alpha\beta}e_{\beta},$ and $De_{m+n+1}=0$,
where $(\omega_{\alpha\beta})$ is the matrix expression of the connection $D$. Consequently, $\omega_{m+n+1~\alpha}=-\omega_{\alpha~m+n+1}=0$.

\noindent\textbf{Proof of Theorem \ref{main}:}
\begin{proof}
By the assumption, choose the Riemannian metric $ds^2_M$ on $M$ with positive Ricci curvature. According to Theorem \ref{Positive Ricci}, for the connection metric on $E(\xi \oplus \mathbf{1})$, and for $r_0>0$ small enough, the sphere bundle $S_{r_0}(\xi \oplus \mathbf{1})$ of radius $r_0$ with the induced metric has positive Ricci curvature.

Next, we will construct the natural isoparametric foliation on $S_{r_0}(\xi \oplus \mathbf{1})$. Let $f: S_{r_0}(\xi \oplus \mathbf{1})\rightarrow \mathbb{R}$ be a smooth function such that
$f(p,v)$ is the $e_{m+n+1}$-component of $v$ for each point $(p,v)\in S_{r_0}(\xi \oplus \mathbf{1})$. We will prove that $f$ is an isoparametric function.

Choose an open neighborhood $U$ in $M$ such that $ds_M^2|_U=\sum_{i=1}^{m}(\omega_i)^2$,
where $\omega_1,..., \omega_m$ are $1$-forms on $U$. Let $\omega_{ij}$ and $\Omega_{ij}$ be the connection $1$-forms and
curvature $2$-forms, respectively. Moreover, assume $U$ is small enough such that $\xi|_U$ is trivial, and choose orthonormal cross sections
$e_{m+1},...,e_{m+n}$ for $\xi|_U$, with connection $1$-forms $\omega_{\alpha\beta}$ and curvature 2-forms $\Omega_{\alpha\beta}$. For the vector bundle $\xi \oplus \mathbf{1}$, we assume $e_{m+n+1}$ is parallel as before, i.e., $\omega_{m+n+1~\alpha}=-\omega_{\alpha~m+n+1}=0$.

Under this local trivialization of $(\xi \oplus \mathbf{1})|_U$, for each point $(p, v)\in (\xi \oplus \mathbf{1})|_U$, we have $v=v_{\alpha}e_{\alpha}+v_{m+n+1}e_{m+n+1}$. Define $\theta_i:=\pi^*\omega_i$, $\theta_{\alpha}:=dv_{\alpha}
+v_{\beta}\omega_{\beta\alpha}$ and $\theta_{m+n+1}:=dv_{m+n+1}$. The connection metric $ds^2_{(\xi \oplus \mathbf{1})}$ can be expressed as
$ds^2_{(\xi \oplus \mathbf{1})}=(\theta_i)^2+(\theta_{\alpha})^2+(\theta_{m+n+1})^2$. Write $u_{\alpha}=\frac{v_{\alpha}}{r}$ and $u_{m+n+1}=\frac{v_{m+n+1}}{r}$, where $r^2=(v_{\alpha})^2+(v_{m+n+1})^2$. Then, for the sphere bundle $S_{r_0}(\xi \oplus \mathbf{1})$, the induced metric $ds^2_{S_{r_0}(\xi \oplus \mathbf{1})}$
metric can be expressed as
$$ds^2_{S_{r_0}(\xi \oplus \mathbf{1})}=(\theta_i)^2+r_0^2(du_{\alpha}+u_{\beta}\omega_{\beta\alpha})^2+r_0^2(du_{m+n+1})^2.$$
Under this local coordinate system, $f(p, v)=r_0u_{m+n+1}$ for $(p, v)\in S_{r_0}(\xi \oplus \mathbf{1})$. It follows that $|\nabla f|^2=1-\frac{f^2}{r_0^2}$.

To complete the proof, it is sufficient to show the regular hypersurfaces of $f$ have constant principal curvatures. Write $w_{\alpha}=\frac{u_{\alpha}}{\sqrt{1-(u_{m+n+1})^2}}$, then
$$ds^2_{S_{r_0}(\xi \oplus \mathbf{1})}=(\theta_i)^2+r_0^2\frac{(du_{m+n+1})^2}{1-(u_{m+n+1})^2}+r_0^2(1-(u_{m+n+1})^2)(dw_{\alpha}+w_{\beta}\omega_{\beta\alpha})^2.$$
Using the substitution $t=\arccos{u_{m+n+1}}$ for $-1<u_{m+n+1}<1$, we obtain that
$$ds^2_{S_{r_0}(\xi \oplus \mathbf{1})}=(\theta_i)^2+r_0^2(dt)^2+r_0^2\sin^2{t}(dw_{\alpha}+w_{\beta}\omega_{\beta\alpha})^2.$$
Now, we can choose $\varphi_{i}=\theta_i$, $\varphi_{0}=r_0dt$, and $\varphi_{a}=r_0\sin{t}\psi_{a}$ such that $$\sum_{a=m+1}^{m+n-1}(\psi_{a})^2=(dw_{\alpha}+w_{\beta}\omega_{\beta\alpha})^2.$$
Then $\{\varphi_{0}, \varphi_{i}, \varphi_{a}\}$ is a moving frame for $S_{r_0}(\xi \oplus \mathbf{1})$ with $\varphi_{0}|_{f^{-1}(c)}=0$ for $-r_0<c<r_0$. Let $\varphi_{0i}, \varphi_{0a}, \varphi_{ij}, \varphi_{ia}, \varphi_{ab}$ be the connection $1$-forms. To obtain the principal curvatures of $f^{-1}(c)\subset S_{r_0}(\xi \oplus \mathbf{1})$, it is sufficient to determine the forms $\varphi_{0i}$ and $\varphi_{0a}$. Define
\begin{eqnarray}
\varphi_{0i}&=&h_{ij}\varphi_j+h_{ia}\varphi_a,\nonumber\\
\varphi_{0a}&=&h_{ai}\varphi_i+h_{ab}\varphi_b,\nonumber\\
\varphi_{ij}&=&\lambda_{ij}\varphi_0~\mathrm{mod}\{\varphi_{i}, \varphi_{a}\},\nonumber\\
\varphi_{ia}&=&\lambda_{ia}\varphi_0~\mathrm{mod}\{\varphi_{i}, \varphi_{a}\},\nonumber\\
\varphi_{ab}&=&\lambda_{ab}\varphi_0~\mathrm{mod}\{\varphi_{i}, \varphi_{a}\}.\nonumber
\end{eqnarray}
From the first structural equations, we have
\begin{eqnarray}
d\varphi_{0}&=&\varphi_{0i}\wedge\varphi_{i}+\varphi_{0a}\wedge\varphi_{a},\nonumber\\
d\varphi_{i}&=&\varphi_{i0}\wedge\varphi_{0}+\varphi_{ij}\wedge\varphi_{j}+\varphi_{ia}\wedge\varphi_{a},\nonumber\\
d\varphi_{a}&=&\varphi_{a0}\wedge\varphi_{0}+\varphi_{ai}\wedge\varphi_{i}+\varphi_{ab}\wedge\varphi_{b}.\nonumber
\end{eqnarray}
By a direct computation, we have
\begin{eqnarray}
d\varphi_{0}&=&0,\nonumber\\
d\varphi_{i}&=&\pi^*\omega_{ij}\wedge \varphi_{j},\nonumber\\
d\varphi_{a}&=&\frac{\cot{t}}{r_0}\varphi_0\wedge \varphi_{a}~\mathrm{mod}\{\varphi_{i}\wedge \varphi_{j}, \varphi_{i}\wedge \varphi_{a},
\varphi_{a}\wedge \varphi_{b}\}.\nonumber
\end{eqnarray}
It follows that
\begin{equation}
\left\{ \begin{array}{lll}
h_{ij}=h_{ji}, h_{ia}=h_{a i}, h_{ab}=h_{ba};\nonumber\\
h_{ij}=-\lambda_{ij}, h_{ia}=-\lambda_{ia};\nonumber\\
h_{ai}=\lambda_{ia}, h_{ab}=-\lambda_{ab}+\frac{\cot{t}}{r_0}\delta_{ab}.\nonumber
\end{array}\right.
\end{equation}
Therefore, $\varphi_{0i}=0$ and $\varphi_{0a}=\frac{\cot{t}}{r_0}\varphi_a$. That is to say, the hypersurface $f^{-1}(c)$
has constant principal curvatures $0$ and $-\frac{\cot{t}}{r_0}$ with $t=\arccos{\frac{c}{r_0}}$ for the unit normal vector field
determined by $\varphi_0$.
The proof is complete.
\end{proof}

\section{Applications to isoparametric foliation of unit spheres}
In this section, based on Theorem \ref{main}, we will study the Ricci curvature and isoparametric foliation on double manifolds $D(S^{n+1}_{\pm})$, and prove Theorem \ref{isoparametric case}.

\noindent\textbf{Proof of Theorem \ref{isoparametric case}:}

\begin{proof}
Let $Y^n$ be a closed isoparametric hypersurface with $4$ distinct principal curvatures in $S^{n+1}(1)$ with two focal submanifolds $M_{+}$ and $M_{-}$
of codimension $m_1+1$ and $m_2+1$ in $S^{n+1}(1)$ respectively(cf. \cite{CR85}). And $n=2(m_1+m_2)$. Without loss of generality, we can assume $m_1\leq m_2$.

\noindent \textbf{Case (1). $(m_1, m_2)=(1, k)$:} According to \cite{Ta76}, the isoparametric hypersurfaces in this case must be homogeneous. More precisely, they are
the principal orbits of isotropy representation of the symmetric pair $(\mathrm{SO}(k+4), \mathrm{SO}(2)\times \mathrm{SO}(k+2))$. For the symmetric pair $(\mathrm{SO}(k+4), \mathrm{SO}(2)\times \mathrm{SO}(k+2))$, we have the Cartan decomposition $\mathfrak{o}(k+4)=(\mathfrak{o}(2)+\mathfrak{o}(k+2))\oplus
\mathfrak{p}$, where $\mathfrak{p}$ is the orthogonal complement of $(\mathfrak{o}(2)+\mathfrak{o}(k+2))$ with respect to the Killing form $B$ of $\mathfrak{o}(k+4)$,
and $\mathfrak{p}$ has the canonical induced inner product. Let $S(\mathfrak{p})$ be the unit sphere in $\mathfrak{p}$. Then $G=\mathrm{SO}(2)\times \mathrm{SO}(k+2)$ acts on $\mathfrak{p}$ by conjugation and it induces the cohomogeneity one action on $S(\mathfrak{p})$, the principal orbits of which are isoparametric hypersurfaces in this case. Let $K_+=\Delta \mathrm{SO}(2)\times\mathrm{SO}(k)$, $K_-=\mathbb{Z}_2\cdot\mathrm{SO}(k+1)$ and
$H=\mathbb{Z}_2\cdot\mathrm{SO}(k)$ be the closed subgroups of $G$. Under the notations of \cite{GZ02}, $(S(\mathfrak{p}), G)$ is a chomogeneity one manifold determined by the group diagram $H\subset K_{+}, K_{-} \subset G$.
The two focal submanifolds(singular orbits) $M_+$ and $M_-$ are determined by
$M_+= G/K_+\cong V_{k+2, 2}$, the Stiefel manifold of $2$-orthonormal frames in $\mathbb{R}^{k+2}$, and
$$M_-=G/K_-\cong (S^1\times S^{k+1})/\mathbb{Z}_2=(S^1\times S^{k+1})/(\theta, x)\sim(\theta+\pi, -x).$$
Thus, the double manifold $D(S^{n+1}_+)$ has an induced cohomogeneity one action by $G$ with group diagram $H\subset K_+, K_+\subset G$. By Van Kampen's
theorem, $\pi_1D(S^{n+1}_+)\cong\pi_1 M_+=\mathbb{Z}_2$ if $k=1$, and $\pi_1D(S^{n+1}_+)\cong\pi_1 M_+=\{0\}$ if $k>1$. It follows from Theorem A in \cite{GZ02} that $D(S^{n+1}_+)$ admits an invariant metric with positive Ricci curvature. Meanwhile, the principal orbits are homogeneous isoparametric hypersurfaces. For the case of $D(S^{n+1}_-)$, it has the cohomogeneity one action by $G$ with group diagram $H\subset K_-, K_-\subset G$. By Van Kampen's theorem again, $\pi_1D(S^{n+1}_-)\cong\pi_1 M_-=\mathbb{Z}$. Thus, by Theorem A
in \cite{GZ02}, $D(S^{n+1}_-)$ only admits an invariant metric with nonnegative Ricci curvature, and a homogeneous isoparametric foliation.

%\begin{thm}(\cite{GZ02})
%Assume $M$ is a closed manifold which has a cohomogeneity one action by %compact Lie group $G$. Then $M$ always admits a $G$-invariant metric with %non-negative Ricci curvature. Moreover, $M$ admits a $G$-invariant metric %with positive Ricci curvature if and only if $\pi_1 M$ is finite.
%\end{thm}

\noindent \textbf{Case (2). $2\leq m_1\leq m_2$:}
According to \cite{Mu80}, both of the focal submanifolds are minimal submanifolds in $S^{n+1}(1)$. Moreover, for any unit normal vector $\eta$ of $M_+$( respectively $M_-$ ), the principal curvatures of the shape operator
$A_{\eta}$ are $1, -1, 0$.
To complete the proof, it is sufficient to only consider $M_+$ and prove the induced metric of $M_+$ in $S^{n+1}(1)$ has positive Ricci curvature. Choose a local
adapted moving frame $\{e_{i}, e_{\alpha}\}$ of $M_+$ for $1\leq i\leq m_1+2m_2$ and $m_1+2m_2+1\leq\alpha\leq n+1$, where $e_i$'s are tangent to $M_+$
and $e_{\alpha}$'s are normal to $M_+$. Then, by the Gauss equation, for each unit tangent vector $X$ of $M_{+}$, the Ricci curvature $\mathrm{Ric}^{M_+}$ is given by
$$\mathrm{Ric}^{M_+}(X, X)=2m_2+m_1-1-\sum_{\alpha}\langle A_{\alpha}X, A_{\alpha}X\rangle.$$
For any unit normal vector $\eta$ of $M_+$, the principal curvatures of the shape operator
$A_{\eta}$ are $1, -1, 0$. It follows that $\langle A_{\alpha}X, A_{\alpha}X\rangle\leq 1$. Therefore,
$\mathrm{Ric}^{M_+}(X, X)\geq 2(m_2-1)>0$ if $m_2\geq 2$. Now, the proof is complete.
\end{proof}

\begin{ack}
The authors are very grateful to Professor Zizhou Tang for valuable discussions. The second author is also very grateful to Professor Yoshihiro Ohnita
for helpful conversations when he attended the 8th OCAMI-KNUGRG Joint International Workshop on
Submanifold Geometry and Related Topics at Osaka City University in 2014. The first author was supported in part by the NSFC (No. 11331002, No. 11471299), and the second author was partially supported by the NSFC (No. 11401560 and No. 11571339).
\end{ack}


\begin{thebibliography}{123}

%\bibitem[Be78]{Be78} A. L. Besse, \emph{Manifolds all of whose geodesics are closed},
%with appendices by D. B. A. Epstein, J.-P. Bourguignon, L.
% B\'{e}rard-Bergery, M. Berger, and J. L. Kazdan, Ergeb. Math. Grenzgeb.
%Vol. 93, Springer, Berlin, 1978.

\bibitem[CCJ07]{CCJ07} T. E. Cecil, Q. S. Chi and G. R. Jensen, \emph{Isoparametric hypersurfaces with four principal curvatures}, Ann. Math., \textbf{166} (2007), 1--76.

\bibitem[CR85]{CR85} T. E. Cecil and P. T. Ryan, \emph{Tight and taut immersions of manifolds}, Research Notes in Math. 107, Pitman, London, (1985).

\bibitem[Ch13] {Ch13}Q. S. Chi, \emph{Isoparametric hypersurfaces with four principal curvatures, III}, J. Diff. Geom., \textbf{94} (2013), 469--504.

\bibitem[Ge14] {Ge14} J. Q. Ge, \emph{Isoparametric foliations, diffeomorphism groups and exotic smooth structures},  arXiv:1404.6194, 2014.

\bibitem[GQ15] {GQ15} J. Q. Ge and C. Qian, \emph{Differential topology interacts with isoparametric foliations}, to appear in Geometry and Topology of Manifolds, The 10th Geomtery Conference for the Friendship of China and Japan 2014, Springer Proceedings in Mathematics \& Statistics.

\bibitem[GT13] {GT13} J. Q. Ge and Z. Z. Tang, \emph{Isoparametric functions and exotic spheres}, J. Reine Angew. Math., \textbf{683} (2013), 161--180.

\bibitem[GX10] {GX10} J. Q. Ge and Y. Q. Xie, \emph{Gradient map of isoparametric polynomial and its application to Ginzburg-Landau system}, J. Func. Anal., \textbf{258} (2010), 1682--1691.

\bibitem[GL80] {GL80} M. Gromov and H. B. Lawson, \emph{The classification of simply connected manifolds of positive
scalar curvature}, Ann. Math., \textbf{111} (1980), 423--434.

%\bibitem[GZ00] {GZ00} K. Grove and W. Ziller, \emph{Curvature and symmetry of Milnor spheres}, Ann. Math., \textbf{152} (2000), 331--367.

\bibitem[GZ02] {GZ02} K. Grove and W. Ziller, \emph{Cohomogeneity one manifolds with positive Ricci curvature}, Invent. Math., \textbf{149} (2002), 619--646.

\bibitem[Im08]{Im08} S. Immervoll, \emph{On the clssification of isoparametric hypersurfaces with four ditinct principal curvatures in
spheres}, Ann. Math., \textbf{168} (2008), 1011--1024.

\bibitem[Je73] {Je73}G. R. Jensen, \emph{Einstein metrics on principal fibre bundles}, J. Diff. Geom., \textbf{8} (1973), 599--614.

\bibitem[KS00] {KS00} O. Kowalski and M. Sekizawa, \emph{On tangent sphere bundles with small or
large constant radius}, Ann. Glob. Anal. Geom., \textbf{18} (2000), 207--219.


%\bibitem[Miy] {Miy} R. Miyaoka, \emph{The linear isotropy $G_2/SO(4)$}, $\textbf{177} (2013), 53--110.

%\bibitem[Miy12] {Miy12} R. Miyaoka, \emph{Isoparametric hypersurfaces with $(g,m)=(6,2)$}, Ann. Math., \textbf{177} (2013), 53--110.

\bibitem[M\"{u}80] {Mu80} H. F. M\"{u}nzner, \emph{Isoparametric hyperfl\"{a}chen in sph\"{a}ren, I and II},Math. Ann., \textbf{251} (1980), 57--71 and
\textbf{256} (1981), 215--232.

\bibitem[Na79] {Na79} J. C. Nash, \emph{Positive Ricci curvature on fibre bundles}, J. Diff. Geom.,\textbf{14} (1979), 241--254.

%\bibitem[OT75] {OT75} H. Ozeki and M. Takeuchi, \emph{On some types of isoparametric hypersurfaces in spheres I and II}, T\^{o}hoku Math. J., \textbf{27} (1975), 515--559 and \textbf{28} (1976), 7--55.

\bibitem[QT14] {QT14} C. Qian and Z. Z. Tang, \emph{Recent progress in isoparametric functions and isoparametric hypersurfaces}, Real and Complex Submanifolds,
Volume 106 of the series Springer Proceedings in Mathematics \& Statistics, 65--76.

\bibitem[QT14a] {QT14a} C. Qian and Z. Z. Tang, \emph{Isoparametric foliations, a problem of Eells-Lemaire and conjectures of Leung},   arXiv:1407.0539, 2014.

\bibitem[QT15] {QT15} C. Qian and Z. Z. Tang, \emph{Isoparametric functions on exotic spheres}, Adv. Math.,\textbf{272} (2015), 611--629.

\bibitem[SY79] {SY79} R. Schoen and S. T. Yau, \emph{On the structure of manifolds with positive scalar curvature}, Manuscripta
Math.,\textbf{28} (1979), 159--183.

\bibitem[Ta76]{Ta76}
R. Takagi, \emph{A class of hypersurfaces with constant principal curvatures in a sphere}, J. Diff. Geom., \textbf{11}(1976), 225--233.


%\bibitem[TT72] {TT72} R. Takagi and T. Takahashi, \emph{On the principal curvatures of homogeneous hypersurfaces in a sphere}, Diff. Geom. in honor of K. Yano, Kinokuniya, Tokyo, 1972, 469--481.

\bibitem[TXY12]{TXY12}
Z. Z. Tang, Y. Q. Xie and W. J. Yan, \emph{Schoen-Yau-Gromov-Lawson theory and isoparametric foliations}, Comm. Aanl. Geom., \textbf{20}(2012), 989--1018.

\bibitem[TY13]{TY13}
Z. Z. Tang and W. J. Yan, \emph{Isoparametric foliation and Yau conjecture on the first eigenvalue}, J. Diff. Geom., \textbf{94}(2013), 521--540.

\bibitem[TY15]{TY15}
Z. Z. Tang and W. J. Yan, \emph{Isoparametric foliation and a problem of Besse on generalizations of Einstein condition}, Adv. Math., \textbf{285}(2015), 1970--2000.

%\bibitem[Th12] {Th12} G. Thorbergsson, \emph{Singular Riemannian foliations and isoparametric submanifolds}, Milan Journal of Mathematics, \textbf{78}(2010), 355--370.

\bibitem[Wa87] {Wa87} Q. M. Wang, \emph{Isoparametric functions on Riemannian manifolds. I}, Math. Ann.,\textbf{277} (1987), 639--646.




\end{thebibliography}
\end{document}